\makeatletter \@addtoreset{equation}{section} \makeatother
\renewcommand\thetable{\thesection.\@arabic\c@table}
\theoremstyle{plain}
\newtheorem{maintheorem}{Theorem}
\newtheorem{theorem}{Theorem}[section]
\newtheorem{lemma}{Lemma}[section]
\newtheorem{definition}{Definition}[section]
\newtheorem{Thm}{Theorem}[section]
\newtheorem{Lem}[Thm]{Lemma}
\theoremstyle{remark}
\newtheorem{Def}[Thm] {Definition}
\newtheorem{Rem}[Thm] {Remark}
\long\def\begcom#1\endcom{}
\newcommand{\length}{\operatorname{\length}}
\def\length{\operatorname{length}}
\newcommand{\bl} {\begin{lemma}}
\newcommand{\el} {\end{lemma}}
\newcommand{\bt} {\begin{theorem}}
\newcommand{\et} {\end{theorem}}
\newcommand{\bp}{\begin{proof}}
\newcommand{\ep}{\end{proof}}
\newcommand  {\ee} {\end{equation}}
\newcommand  {\beq} {\begin{eqnarray*}}
\newcommand  {\eeq} {\end{eqnarray*}}
\newcommand  {\bd} {\begin{definition}}
\newcommand  {\ed} {\end{definition}}
\def\ep{\noindent{\hfill $\Box$}}
\begin{document}

\title{Laypunov Irregular Points with Distributional Chaos} 

\author{An Chen and Xueting Tian}

\address{Xueting Tian, School of Mathematical Sciences,  Fudan University\\Shanghai 200433, People's Republic of China}
\email{xuetingtian@fudan.edu.cn}
\urladdr{http://homepage.fudan.edu.cn/xuetingtian}

\address{An Chen, School of Mathematical Sciences,  Fudan University\\Shanghai 200433, People's Republic of China}
\email{15210180001@fudan.edu.cn}

\begin{abstract}
It follows from Oseledec Multiplicative Ergodic Theorem (or Kingman¡¯s Sub-additional Ergodic Theorem) that the Lyapunov-irregular set of
points for which the Oseledec averages of a given continuous cocycle diverge has zero measure with respect to any invariant probability measure.
In strong contrast, for any dynamical system $f : X \rightarrow X$ with exponential specification property and a H$\ddot{\text{o}}$lder continuous matrix cocycle $A : X \rightarrow GL(m,R)$, we show here that if there exist ergodic measures with different Lyapunov spectrum, then the Lyapunov-irregular set of $A$ displays distributional chaos of type 1.
\end{abstract}


\keywords{Laypunov exponent, Exponential specification, Distributional chaos, Scrambled set}
\subjclass[2010] {  37C50;  37B20;  37B05; 37D45; 37C45.   }
\maketitle
\section{Introduction}

In page 264 of his book   \cite{MaBook},  Ricardo Ma\~{n}\'{e} wrote: ``In general, (Lyapunov) regular points are very few from the topological point of view - they form a set of first category". Some authors put different interpretations on his statement. For example, Theorem 3.14 of \cite{ABC} by Abdenur, Bonatti and Crovisier interprets ``in general" in the statement as ``for $C^1$-generic diffeomorphisms"; Theorem 1.4 of \cite{Tresidual} by Tian interprets ``in general" as ``a class of cocycles over dynamics with the exponential specification property". In this paper, we follow Tian's interpretation and study the dynamical complexity of Lyapunov-irregular set. We first introduce the cocycles and define the associated Lyapunov exponents.

Cocycles appear naturally in many important problems in dynamics; for instance, derivative cocycles and  Schr$\ddot{\text{o}}$dinger cocycles\cite{Vianabook}. Let $X$ be a compact metric space, $f:X \rightarrow X$ be a homeomorphism and $A:X\rightarrow GL (m,\mathbb{R})$  be a  continuous matrix function. One main object of interest is the asymptotic behavior of the products of $A$ along the orbits of the transformation $f$, called cocycle induced from $A$: for $n>0$
 $$A (x,n):=A (f^{n-1} (x))\cdots A (f (x))A (x)$$
and $$A (x,-n):=A (f^{-n} (x))^{-1}\cdots A (f^{-2} (x))^{-1} A (f^{-1} (x))^{-1}=A (f^{-n}x,n)^{-1}.$$
An important object in understanding the asymptotic behavior of $A (x,n)$ is the Lyapunov exponents associated with the $A (x,n)$. 
 \begin{Def}\cite{MaBook}\label{def2015-Lyapunov-exp}
  We say $x\in X$ to be  (forward) {\it Lyapunov-regular} for $A$,  if there exist numbers $ \chi_1<\chi_2<\cdots<\chi_r\,(r\leq m),$    and an $A-$invariant  decomposition of $\mathbb{R}^m$
$$\mathbb{R}_x^m=G_{ 1} (x)\oplus G_{ 2} (x)\oplus \cdots G_{ r} (x)$$   such that
for any $i=1,\cdots,l$ and any $0\neq v\in G_{ i} (x)$ one has
$$\lim_{n\rightarrow +\infty} \frac1n \log \|A (x,n)v\|=\chi_i.$$ Otherwise, $x$ is called to be {\it Lyapunov-irregular} for $A$. Let $LI(A,f)$ denote the space of all Lyapunov-irregular points for $A$.
\end{Def}


If $m=1,$ $\chi_1$ can be written as Birkhoff ergodic average $$ \lim_{n\rightarrow +\infty}\frac1n\sum_{j=0}^{n-1}\phi(f^j(x))$$ where $\phi(x)=\log\|A (x)\|$. $\phi(x)$ is a continuous function since $A(x)$ is continuous. So, the case of $m=1$ is in fact to study Birkhoff ergodic average. By Birkhoff ergodic theorem, the irregular set for $\phi(x)$  is always of zero measure for any invariant measure, i.e. simple from the perspective of measure. Pesin and Pitskel \cite{Pesinir} are the first to notice the Birkhoof-irregular set displays dynamical complexity from the topological entropy and dimensional perspective in the case of the full shift on two symbols. Then, Barreira, Schmeling, etc. showed that the irregular points can carry full entropy in more general systems(see \cite{BSir,TV,CKS,TDbeta,Thompson2008,PW}). Ruelle used the terminology `historic behavior' in \cite{Ruelle} to describe irregular point and in contrast to dimensional perspective. Takens asked in \cite{Takens} for which smooth dynamical systems the points with historic behavior has positive Lebesgue measure. Moreover, many researchers studied irregular set from topological or geometric or chaotic viewpoint(see \cite{BO,CT,HLO,LW1,LW2,Olsen}). For Lyapunov-irregular set, it is also simple from the perspective of measure by Oseledec Multiplicative Ergodic Theorem. In \cite{Furman}, Furman proved that some smooth cocycles over irrational rotations (which were previously studied by Herman) have a residual set of Lyapunov-irregular points. Then Tian generalized the result in \cite{Tresidual} and study it from the topological entropy\cite{Tentropy}. However, there is no result from the viewpoint of chaos. 

The notion of chaos was first introduced in mathematic language by Li and Yorke in \cite{LY} in 1975. For a dynamical system $(X,f)$, they defined that $(X,f)$ is Li-Yorke chaotic if there is an uncountable scrambled set $S\subseteq X$, where $S$ is called a scrambled set if for any pair of distinct two points $x,y$ of $S$, $$\liminf_{n\to +\infty}d(f^nx,f^ny)=0,\ \limsup_{n\to +\infty}d(f^nx,f^ny)
>0.$$
(We say a pair $x,y$ is distal if $\liminf_{n\to +\infty}d(f^nx,f^ny)>0$). Since then, several refinements of chaos have been introduced and extensively studied. One of the most important extensions of the concept of chaos in sense of Li and Yorke is distributional chaos \cite{SS1994}. The stronger form of chaos has three variants: DC1(distributional chaos of type 1), DC2 and DC3 (ordered from strongest to weakest).   In this paper, we focus on DC1. Readers can refer to \cite{Dwic,SS,SS2} for the definition of DC2 and DC3.
A pair $x,y\in X$ is DC1-scrambled if the following two conditions hold:
$$\forall t>0,\ \limsup_{n\to \infty}\frac{1}{n}|\{i\in [0,n-1]:\ d(f^i(x),f^i(y))<t\}|=1,$$
$$\exists t_0>0,\ \liminf_{n\to \infty}\frac{1}{n}|\{i\in [0,n-1]:\ d(f^i(x),f^i(y))<t_0\}|=0.$$ In other words, the orbits of $x$ and $y$ are arbitrarily close with upper density one, but for some distance, with lower density zero. 

\begin{Def}
 A set $S$ is called a    DC1-scrambled set if any pair of  distinct points in $S$ is DC1-scrambled. A map $f:X\rightarrow X$ is distributional chaos of type 1 if $X$ has an uncountable DC1-scrambled set $S$.
\end{Def}
The distributional chaos is a very famous concept in describing the dynamical complexity. For example, when $X$ is one dimension, the existence of at least one scrambled pair (in the weakest version DC3) implies positive topological entropy \cite{SS1994} or even much more complex dynamics \cite{BSSS}. Distributional chaos, to a certain extent, reveals the topological complexity of trajectories. So, inspired by Ma\~{n}\'{e}'s statement and the results in \cite{Furman,Tresidual,Tentropy}, we show that for a dynamical system with exponential specification(see definition in section 2), Lyapunov-irregular set displays distributional chaos of type 1.

\begin{maintheorem}\label{LIDC-1}
Let $f : X\rightarrow X$ be a homeomorphism of a compact metric space $X$ with exponential specification. Let $A : X \rightarrow GL(m,R)$ be a H\"{o}lder continuous matrix function. Then either all ergodic measures have same Lyapunov spectrum or the Lyapunov-irregular set $LI(A,f)$ contains an uncountable DC1-scrambled set.
\end{maintheorem}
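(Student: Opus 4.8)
The plan is to reduce the statement to a fact about the top Lyapunov exponent of a single H\"older cocycle, and then to build, by exponential shadowing, a Cantor set of points whose top Oseledets average oscillates and which is pairwise DC1-scrambled. First I would pass to exterior powers. For an ergodic measure $\mu$ the sorted Lyapunov spectrum is encoded by the partial sums $s_k(\mu)=\chi_1(\mu)+\cdots+\chi_k(\mu)$, and $s_k(\mu)$ is exactly the top Lyapunov exponent of the cocycle $\wedge^k A$. Hence, if two ergodic measures $\mu,\nu$ have different spectra, then $\wedge^k A$ has different top exponents with respect to $\mu$ and $\nu$ for some $1\le k\le m$. Moreover, a point that is Lyapunov-regular for $A$ is automatically Lyapunov-regular for every $\wedge^k A$ (the splitting and exponents of $\wedge^k A$ are the induced ones), so it suffices to produce irregular points for $F:=\wedge^k A$. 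Writing $\Phi_n(x)=\frac1n\log\|F(x,n)\|$, Kingman's theorem gives a $\mu$-typical $x_a$ and a $\nu$-typical $x_b$ with $\Phi_n(x_a)\to a$ and $\Phi_n(x_b)\to b$, where $a<b$ are the two top exponents; in particular, long segments over $x_a$ (resp.\ $x_b$) have average close to $a$ (resp.\ $b$).

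The three ingredients are the following. (i) A cocycle-shadowing estimate, which is where \emph{exponential} specification is decisive: since the shadowing orbit is exponentially close to each prescribed segment (closest in the middle) and $F$ is H\"older, the total multiplicative distortion over one segment is a convergent geometric sum, hence bounded by a constant independent of the segment's length; concatenating $\ell$ segments through the bounded specification gaps, the shadow $z$ satisfies $\bigl|\log\|F(z,N)\|-\sum_j\log\|F(y_j,n_j)\|\bigr|=O(\ell)$, which will be $o(N)$ below. (ii) The typical points $x_a,x_b$ above. (iii) Two \emph{distinct periodic points} $p\ne q$, which exist since specification forces periodic points to be dense; as $f$ is injective on orbits, $d(f^ip,f^iq)>0$ for every $i$, and since these distances are periodic in $i$ we obtain a uniform separation $\delta_0:=\min_i d(f^ip,f^iq)>0$. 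Set $t_0:=\delta_0/3$; shadowing $p$ versus $q$ on a whole window then keeps the two shadow-orbits at distance $\ge\delta_0-2\eps>t_0$ at \emph{every} step.

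Now I would build the Cantor set. Fix an uncountable family $\Omega\subseteq\{0,1\}^{\mathbb N}$ any two of whose members differ in infinitely many coordinates (the relation ``differ in finitely many places'' has countable classes, so uncountably many class-representatives work). For each $\omega\in\Omega$ I prescribe an orbit as an alternating concatenation of \emph{coincidence windows} $C_1,C_2,\dots$ and \emph{coding windows} $D_1,D_2,\dots$. Every $C_k$ is common to all $\omega$: it shadows $x_a$ for $k$ even and $x_b$ for $k$ odd. The window $D_k$ shadows $p$ if $\omega_k=0$ and $q$ if $\omega_k=1$. Lengths are chosen recursively so that each new window is enormously longer than the total length of everything preceding it; then exponential specification produces a point $z_\omega$ shadowing the whole infinite itinerary, and $\omega\mapsto z_\omega$ is injective (distinct codes force a separation on some $D_k$), so $S:=\{z_\omega\}$ is uncountable.

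Finally the three verifications. \emph{Irregularity:} at the end of $C_k$ the running length $n\approx|C_k|$, while all earlier segments plus the $O(\#\text{segments})$ distortion contribute $o(|C_k|)$ to $\log\|F(z_\omega,n)\|$; hence $\Phi_n(z_\omega)\to a$ along even $k$ and $\to b$ along odd $k$, so $\Phi_n$ diverges and $z_\omega\in LI(F,f)\subseteq LI(A,f)$. \emph{First DC1 condition:} at the end of $C_k$ both orbits shadow the same reference, hence are $2\eps_k$-close throughout $C_k$, and $|C_k|/n\to1$, so for every $t>0$ the upper density of $\{i:d(f^iz_\omega,f^iz_{\omega'})<t\}$ equals $1$. \emph{Second DC1 condition:} if $\omega\ne\omega'$ they differ at infinitely many coding positions $k$, and on such a $D_k$ the orbits shadow $p$ and $q$ and stay $>t_0$ apart at every step, so at the end of $D_k$ the density of $t_0$-close times is $\le(n-|D_k|)/n\to0$, giving liminf $0$. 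Thus every pair in $S$ is DC1-scrambled. The main obstacle is exactly this second condition: an ordinary ``far on a positive-density set'' separation only yields DC2, and forcing the close-density down to $0$ requires windows on which the two orbits are \emph{uniformly} separated over their whole (dominating) length, which is precisely what the pair of periodic orbits together with the length-domination scheme supplies; the bounded-distortion estimate from \emph{exponential} (rather than ordinary) specification is what keeps the Oseledets averages pinned to $a$ and $b$.
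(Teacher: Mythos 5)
Your overall architecture matches the paper's: reduce to the top exponent of $\wedge^kA$ via exterior powers, then interleave long shadowing windows with dominating lengths, using one family of windows to make the Oseledec average oscillate between $a$ and $b$ and a coded family of windows over a uniformly separated pair to force DC1-scrambling. The genuine gap is your ingredient (i). For matrix cocycles, exponential closeness of two orbit segments plus H\"older continuity does \emph{not} give $\bigl|\log\|F(z,n)\|-\log\|F(y,n)\|\bigr|=O(1)$: the operator norm of a product is not the product of the norms, so even though each factor $A(f^iz)$ differs from $A(f^iy)$ by a summable relative error, the perturbations can realign singular directions and shift $\log\|\cdot\|$ of the product by an amount proportional to $n$. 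This is exactly why the paper must invoke Kalinin's machinery: the upper bound (Lemma \ref{Lem-simple-estimate-Lyapunov}, i.e.\ \cite[Lemma 3.1]{Kal}) compares the two norms only up to a factor $l^2e^{2n\epsilon+cl\delta^\alpha}$ --- a loss of $\epsilon$ per iterate, not a constant --- and it requires both endpoints of the reference segment to lie in a Pesin block $\mathcal R^\mu_{\epsilon,l}$, which the paper arranges via Poincar\'e recurrence when selecting the window lengths $H_i,L_i$. Worse, the lower bound you need to pin the average at the \emph{larger} exponent $a$ does not follow from any norm-distortion estimate at all; the paper obtains it from the cone lemma (Lemma \ref{Lem-simple-estimate-Lyapunov-2}, i.e.\ \cite[Lemma 3.3]{Kal}) by tracking a vector of the invariant expanding cone in the Lyapunov metric, and this is where the spectral-gap condition $\epsilon<(\chi-\sigma)/2$ enters. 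Your dominating-length scheme is robust enough to absorb an $e^{\pm2\epsilon n}$ loss in place of your claimed $O(\ell)$, so the construction is repairable, but only by importing precisely the Pesin-block/Lyapunov-norm estimates your ``convergent geometric sum'' tried to bypass.

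A secondary problem: the (exponential) specification property as defined in this paper and assumed in the theorem does not include the periodic-orbit clause of Bowen's version, so you cannot conclude that periodic points exist, let alone are dense. The paper instead produces the uniformly separated pair needed for the second DC1 condition by taking an ergodic measure $\nu$ whose support is minimal and nondegenerate and using that $x,fx$ is then a distal pair; this pair also doubles as the reference orbit for the exponent-$a$ windows. You would need to replace your periodic points $p,q$ by such a distal pair, or strengthen the hypothesis to Bowen's exponential specification.
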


\section{Preliminaries}

\subsection{Oseledec Multiplicative Ergodic Theorem  \cite [Theorem 3.4.4]{BP}\cite{Os}}
Let $f$ be an invertible ergodic measure-preserving transformation of a Lebesgue probability measure space $ (X,\mu).$ Let $A$ be a measurable cocycle whose generator satisfies $\log \|A^\pm (x)\|\in L^1 (X,\mu).$ Then there exist numbers $$\chi_1<\chi_2<\cdots<\chi_r,$$ an $f-$invariant set $\mathcal{R}^\mu$ with $\mu (\mathcal{R}^\mu)=1,$ and an $A-$invariant   decomposition of $\mathbb{R}^m$ for $x\in \mathcal{R}^\mu,$
$$\mathbb{R}_x^m=E_{\chi_1} (x)\oplus E_{\chi_2} (x)\oplus \cdots E_{\chi_r} (x)$$ with $dim E_{\chi_i} (x)=m_i,$ such that
for any $i=1,\cdots,r$ and any $0\neq v\in E_{\chi_i} (x)$ one has
$$\lim_{n\rightarrow \pm\infty} \frac1n \log \|A (x,n)v\|=\chi_i$$
and $$\lim_{n\rightarrow \pm\infty} \frac1n \log det\, A (x,n) =\sum_{i=1}^rm_i\chi_i.$$

 \begin{Def}\label{Def-LyapunovExponents}
The numbers $\chi_1,\chi_2,\cdots,\chi_r$ are called the {\it Lyapunov exponents} of measure $\mu$ for cocycle $A$ and the dimension $m_i$ of the space $E_{\chi_i} (x)$ is called the {\it multiplicity} of the exponent $\chi_i.$ The collection of pairs $$Sp(\mu,A)=\{(\chi_i,m_i):1\leq i \leq r\}$$ is the {\it Lyapunov spectrum} of measure $\mu.$   $\mathcal{R}^\mu$  is called the {\it Oseledec basin} of  $\mu$ and the decomposition  $\mathbb{R}^m=E_{\chi_1} \oplus E_{\chi_2} \oplus \cdots E_{\chi_r} $  is called the {\it Oseledec splitting} of  $\mu$.  
\end{Def}

Note that for any ergodic measure $\mu,$ all the points in the set $\mathcal{R}^\mu$ are  Lyapunov-regular.  By Oseledec's Multiplicative Ergodic theorem and Ergodic Decomposition Theorem, the set   $$\Delta:=\bigcup_{\mu\in \mathcal{M}^e_{f}(X)} \mathcal{R}^\mu$$ is a Borel set with total measure, that is, $\Delta$ has full measure for any invariant Borel probability measure, where $\mathcal{M}^e_{f}(X)$ denotes the space of all ergodic measures.
  In other words, the Lyapunov-irregular set is 
    always   of zero measure for any invariant probability measure. This does not mean that the set of  Lyapunov-irregular points, where the Lyapunov exponents do  not exist, is empty, even if it is completely negligible from the point of view of measure theory.

%
%

\subsection{Lyapunov Exponents and Lyapunov Metric}
In this section let us recall some Pesin-theoretic techniques, which are mainly from   \cite{Kal} (also see \cite{BP}).

Suppose  $f:X\rightarrow X$ to be an invertible map on a compact metric space $X$ and $A:X\rightarrow GL (m,\mathbb{R})$ to  be a  continuous matrix function. For an ergodic measure $\mu$, let  $\chi_1<\chi_2<\cdots<\chi_r $  be the Lyapunov exponents of $\mu,$   $\mathcal{R}^\mu$  be the   Oseledec basin  of  $\mu$ and the decomposition  $\mathbb{R}^m=E_{\chi_1} \oplus E_{\chi_2} \oplus \cdots E_{\chi_r} $  be the Oseledec splitting  of  $\mu$.
We denote the standard scalar product in $\mathbb{R}^m$ by $\langle \cdot,\cdot\rangle $. For a fixed $\epsilon> 0$ and a   point $x\in \mathcal{R}^\mu$,    the {\it $\epsilon-$Lyapunov scalar product  (or metric)} $\langle \cdot,\cdot\rangle _{x,\epsilon}$ in $\mathbb{R}^m$ is defined as follows.

\begin{Def} \label{Def-scalarproduct}
For $u\in E_{\chi_i} (x),\, v\in E_{\chi_j} (x),\,i\neq j$ we define
 $\langle \cdot,\cdot\rangle _{x,\epsilon}=0.$ For $i=1,\cdots,r$ and $u,v\in E_{\chi_i} (x),$ we define
$$\langle \cdot,\cdot\rangle _{x,\epsilon}=m\sum_{n\in\mathbb{Z}}\langle A (x,n)u,A (x,n)v\rangle exp (-2\chi_in-\epsilon |n|).$$
\end{Def}
Note that the series in Definition \ref{Def-scalarproduct} converges exponentially for any   $x\in\mathcal{R}^\mu$. The constant $m$ in front of the conventional formula is introduced for more convenient comparison with the standard scalar product. Usually, $\epsilon$ will be fixed and we will denote $\langle \cdot,\cdot\rangle _{x,\epsilon}$ simply by $\langle \cdot,\cdot\rangle _{x}$
 and call it the {\it Lyapunov scalar product.} The norm generated by this scalar product is called the {\it Lyapunov norm}  and is denoted by $\|\cdot\|_{x,\epsilon}$ or $\|\cdot\|_{x}.$


It should be emphasized that, for any given $\epsilon>0,$ Lyapunov scalar product and Lyapunov norm are defined only for $x\in\mathcal{R}^\mu$.
  They depend only measurably on the point even if the cocycle is H$\ddot{\text{o}}$lder. Therefore, comparison with the standard norm becomes important. The uniform lower bound follow easily from the definition: $$\|u\|_{x,\epsilon}\geq \|u\|.$$ The upper bound is not uniform, but it changes slowly along the   orbits of each $x\in\mathcal{R}^\mu$:
   there exists a measurable function $K_\epsilon (x)$ defined on the set  $\mathcal{R}^\mu$ such that
\begin{eqnarray}\label{eq-different-norm-estimate}
\|u\|\leq \|u\|_{x,\epsilon}\leq K_\epsilon (x)\|u\|\,\,\,\,\,\,\,\,\,\forall x\in\mathcal{R}^\mu,\,\,\forall u\in  \mathbb{R}^m
 \end{eqnarray}

When $\epsilon$ is fixed   it is usually  omitted and write $K (x)=K_\epsilon (x).$ For any $l>1$ we also define the following subsets of $\mathcal{R}^\mu$
\begin{eqnarray}\label{eq-estimate-measure-Pesinblock}
  \mathcal{R}^\mu_{\epsilon,l}=\{x\in \mathcal{R}^\mu: \,\,K_\epsilon (x)\leq l\}.
 \end{eqnarray}
Note that $$\lim_{l\rightarrow \infty}\mu (\mathcal{R}^\mu_{\epsilon,l} )\rightarrow 1.$$ Without loss of generality, we can assume that the set $\mathcal{R}^\mu_{\epsilon,l}$ is compact and that Lyapunov splitting and Lyapunov scalar product are continuous on $\mathcal{R}^\mu_{\epsilon,l}.$ Indeed, by Luzin's theorem we can always find a subset of $\mathcal{R}^\mu_{\epsilon,l}$ satisfying these properties with arbitrarily small loss of measure (for standard Pesin sets these properties are automatically satisfied).

\subsection{Specification and Exponential Specification}
Now we introduce (exponential) specification property. Let $f$ be a homeomorphism of a compact metric space $X$. Denote $B_n(x,\delta):=\{y|\ d(f^ix,f^iy)< \delta,\ 0\leq i\leq n\}$. We say the orbit segments $x,fx,\cdots,f^nx$ and $y,fy,\cdots,f^ny$ are exponentially $\delta$ close with exponent $\lambda$ if
$$d(f^ix,f^iy)< \delta e^{-\lambda \mathrm{min}\{i,n-i\}},\ 0\leq i\leq n.$$
Denote $B_n(x,\delta,\lambda):=\{y|\ y,fy,\cdots,f^ny$ and $x,fx,\cdots,f^nx$ are exponentially $\delta$ close with exponent $\lambda$\}.

\begin{Def}
$f$ is called to have specification property if the following holds: for any $\delta>0$, there is a positive integer $N=N(\delta)$ such that for any $k\geq 1$, any point $x_1,x_2,\cdots,x_k\in X$ and any integers sequences $\{N_i\}_{i=1}^{k},\{M_i\}_{i=1}^{k}$ with $N_1=0, N_j\geq N_{j-1}+M_{j-1}+N$,
$$\bigcap_{i=1}^{k}f^{-N_i}B_{M_i}(x_i,\delta)$$
is not empty.
\end{Def}

Remark that the specification property introduced by Bowen  \cite{Bowen3} (or see \cite{KatHas,DGS}) requires that for any $p\geq N_k+M_k+N,$ $\bigcap_{i=1}^{k}f^{-N_i}B_{M_i}(x_i,\delta)$ contains a periodic point with period $p$. We call this to be   {\it Bowen's   Specification property.}


\begin{Def}\label{definition-of-exp-spe}
$f$ is called to have exponential specification property with exponent $\lambda> 0$ (only dependent on the system $f$ itself) if the following holds: for any $\delta>0$, there is a positive integer $N=N(\delta)$ such that for any $k\geq 1$, any point $x_1,x_2,\cdots,x_k\in X$ and any integers sequences $\{N_i\}_{i=1}^{k},\{M_i\}_{i=1}^{k}$ with $N_1=0, N_j\geq N_{j-1}+M_{j-1}+N$,
$$\bigcap_{i=1}^{k}f^{-N_i}B_{M_i}(x_i,\delta,\lambda)$$
is not empty.
\end{Def}
Further, if for any $p\geq N_k+M_k+N,$ $\bigcap_{i=1}^{k}f^{-N_i}B_{M_i}(x_i,\delta,\lambda)$ contains a periodic point with period $p$, then we say $f$ has Bowen's exponential specification property.

\begin{Rem}\label{Rem-exp-spe}
The dynamical systems with the exponential specification widely exist. For example, every transitive Anosov diffeomorphism  has exponential specification property. For more details and examples, see \cite[Remark 2.3, Example 2.4]{Tresidual} and \cite{Kal}
\end{Rem}

\section{Maximal Lyapunov Exponent and Estimate of The Norm of H$\ddot{\text{o}}$lder Cocycles}
The maximal (or largest) Lyapunov exponent  (or simply, MLE) of $A:X\rightarrow GL (m,\mathbb{R})$ at one point $x\in X$ is defined as the limit $$\chi_{max} (A,x):=\lim_{n\rightarrow \infty}\frac1n{\log\|A (x,n)\|},
   $$ if it exists. In this case $x$ is called to be (forward) {\it Max-Lyapunov-regular}. 
    Otherwise, $x$ is {\it Max-Lyapunov-irregular}. By  Kingman's Sub-additive Ergodic Theorem, for
    any ergodic  measure $\mu$ and $\mu$ a.e. point $x$,  MLE always exists and is constant, denoted by $\chi_{max}(A,\mu)$. From Oseledec Multiplicative Ergodic Theorem (as stated above), it is easy to see that  $\chi_{max}(A,\mu)=\chi_r$ where $\chi_1<\chi_2<\cdots<\chi_r$ are the Lyapunov exponents of $\mu.$ 
Let   $MLI (A,f)$ denote the set of all   Max-Lyapunov-irregular points.
 Then it is of zero measure for any ergodic measure and  by Ergodic Decomposition theorem so does it for all invariant measures.
Now let us recall a general estimate of the norm of $A$ along any orbit segment close to   one orbit of $x\in  \mathcal{R}^\mu$  \cite{Kal}.

 \begin{Lem}\label{Lem-simple-estimate-Lyapunov}     \cite [Lemma 3.1] {Kal}
Let $A$ be an $\alpha-$H$\ddot{\text{o}}$lder cocycle ($\alpha>0$) over a continuous map $f$ of a compact metric space $X$ and let $\mu$ be an ergodic measure for $f$ with the maximal Lypunov exponent $\chi_{max}(A,\mu)=\chi.$ Then for any positive $\lambda$ and $\epsilon$ satisfying $\lambda>\epsilon/ \alpha$ there exists $c>0$ such that for any $n\in\mathbb{N}$, any   point $x\in \mathcal{R}^\mu$ with both $x$ and $f^nx$ in $\mathcal{R}^\mu_{\epsilon,l}$, and any point $y\in X$ such that the orbit segments $x,fx,\cdots,f^nx$ and $y,fy,\cdots,f^n (y)$ are exponentially $\delta$ close with exponent $\lambda$ for some $\delta>0$, we have
 \begin{eqnarray}\label{eq-estimate-simple-Lyapunov-2}
  \|A (y,n)\| \leq l e^{cl\delta^\alpha}e^{n (\chi+\epsilon)}\leq l^2 e^{2n\epsilon+cl\delta^\alpha}\|A (x,n)\|.
 \end{eqnarray}
 The constant $c$ depends only on the cocycle $A$ and on the number $ (\alpha\lambda-\epsilon).$

\end{Lem}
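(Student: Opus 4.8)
The plan is to prove the estimate by working in the Lyapunov metric $\|\cdot\|_{x,\epsilon}$ along the orbit of $x$, where the cocycle is tempered, and then transferring back to the standard norm by means of the comparison \eqref{eq-different-norm-estimate} together with the bound $K_\epsilon\le l$ on the Pesin block $\mathcal{R}^\mu_{\epsilon,l}$.

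First I would record the two facts about the Lyapunov metric that drive everything. Unfolding the defining series in Definition \ref{Def-scalarproduct} and shifting the summation index by one, for $u\in E_{\chi_i}(x)$ one obtains the tempering inequalities
$$e^{\chi_i-\epsilon}\|u\|_{x,\epsilon}\le \|A(x)u\|_{fx,\epsilon}\le e^{\chi_i+\epsilon}\|u\|_{x,\epsilon};$$
since the Oseledec splitting is orthogonal in the Lyapunov metric and $\chi=\chi_r$ is the maximal exponent, summing over components gives, for every $u\in\mathbb{R}^m$,
$$\|A(x)u\|_{fx,\epsilon}\le e^{\chi+\epsilon}\|u\|_{x,\epsilon}.$$
The same index shift yields the slowly varying property $K_\epsilon(fx)\le e^\epsilon K_\epsilon(x)$. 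Iterating it forward from $x$ and backward from $f^nx$, and using that both endpoints lie in $\mathcal{R}^\mu_{\epsilon,l}$, I get the two-sided control
$$K_\epsilon(f^ix)\le l\,e^{\epsilon\min\{i,\,n-i\}},\qquad 0\le i\le n.$$

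Next, thinking of each matrix $A(f^iy)$ as an operator from $(\mathbb{R}^m,\|\cdot\|_{f^ix})$ to $(\mathbb{R}^m,\|\cdot\|_{f^{i+1}x})$, I write $A(f^iy)=A(f^ix)+\big(A(f^iy)-A(f^ix)\big)$. The first summand has Lyapunov operator norm at most $e^{\chi+\epsilon}$ by the displayed tempering bound. For the correction, Hölder continuity of $A$ and exponential $\delta$-closeness give $\|A(f^iy)-A(f^ix)\|\le C\delta^\alpha e^{-\alpha\lambda\min\{i,n-i\}}$ in the standard norm; converting to the Lyapunov metric costs a factor $K_\epsilon(f^{i+1}x)\le l\,e^{\epsilon\min\{i,n-i\}}$ (the harmless one-step shift in the index is absorbed into $C$), so the correction has Lyapunov operator norm at most $Cl\delta^\alpha e^{-(\alpha\lambda-\epsilon)\min\{i,n-i\}}$. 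Since operator norms are submultiplicative under composition with matching intermediate metrics,
$$\|A(y,n)\|_{x\to f^nx}\le\prod_{i=0}^{n-1}\Big(e^{\chi+\epsilon}+Cl\delta^\alpha e^{-(\alpha\lambda-\epsilon)\min\{i,n-i\}}\Big).$$
Factoring out $e^{n(\chi+\epsilon)}$ and applying $1+t\le e^t$, the first inequality follows: because $\alpha\lambda-\epsilon>0$, the exponent $\sum_{i=0}^{n-1}e^{-(\alpha\lambda-\epsilon)\min\{i,n-i\}}$ is bounded by $2\sum_{k\ge0}e^{-(\alpha\lambda-\epsilon)k}$, a constant depending only on $\alpha\lambda-\epsilon$, whence $\|A(y,n)\|_{x\to f^nx}\le e^{cl\delta^\alpha}e^{n(\chi+\epsilon)}$; transferring back via $\|A(y,n)\|\le K_\epsilon(x)\|A(y,n)\|_{x\to f^nx}\le l\,e^{cl\delta^\alpha}e^{n(\chi+\epsilon)}$ gives $\|A(y,n)\|\le l e^{cl\delta^\alpha}e^{n(\chi+\epsilon)}$. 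For the second inequality it suffices to produce the matching lower bound $\|A(x,n)\|\ge l^{-1}e^{n(\chi-\epsilon)}$, since then $le^{n(\chi+\epsilon)}\le l^2e^{2n\epsilon}\|A(x,n)\|$; applying the lower tempering inequality to a unit vector in the top Oseledec space $E_{\chi_r}(x)$ yields $\|A(x,n)u\|_{f^nx,\epsilon}\ge e^{n(\chi-\epsilon)}$, and dividing by $K_\epsilon(f^nx)\le l$ converts this to the standard norm.

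I expect the crux to be the bookkeeping in the product step: the interior points $f^ix$ need not lie in the Pesin block, so $K_\epsilon(f^ix)$ is a priori uncontrolled, and one must exploit the $\min\{i,n-i\}$ profile common to both the exponential closeness and the two-sided tempering bound. The hypothesis $\alpha\lambda>\epsilon$ is precisely what makes the competition between the Hölder contraction $e^{-\alpha\lambda\min\{i,n-i\}}$ and the metric distortion $e^{\epsilon\min\{i,n-i\}}$ summable, and summable to a quantity independent of $n$; this is where all the assumptions of the lemma are consumed.
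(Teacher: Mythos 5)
Your proof is correct and is essentially the argument of \cite[Lemma 3.1]{Kal}, which is all the paper itself supplies (the lemma is stated with a citation and no proof): the same one-step decomposition $A(f^iy)=A(f^ix)+\bigl(A(f^iy)-A(f^ix)\bigr)$ viewed as operators between Lyapunov-normed spaces along the orbit of $x$, the same competition between the H\"older decay $e^{-\alpha\lambda\min\{i,n-i\}}$ and the distortion $l\,e^{\epsilon\min\{i,n-i\}}$ made summable by $\alpha\lambda>\epsilon$, the same transfer back via $K_\epsilon(x),K_\epsilon(f^nx)\le l$, and the same lower bound $\|A(x,n)\|\ge l^{-1}e^{n(\chi-\epsilon)}$ from a top Oseledec vector to deduce the second inequality. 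The one imprecision is your claim that the slow variation $K_\epsilon(fx)\le e^{\epsilon}K_\epsilon(x)$ follows from ``the same index shift'': the shift computation tempers the cocycle in Lyapunov norms (relating $\|A(x)u\|_{fx,\epsilon}$ to $\|u\|_{x,\epsilon}$) but does not control the comparison of $\|\cdot\|_{fx,\epsilon}$ with the standard norm, which involves the splitting and angles at $fx$; the property is instead the standard Pesin tempering fact that $K_\epsilon$, being tempered, may be replaced by $\tilde K_\epsilon(x)=\sup_{n\in\mathbb{Z}}K_\epsilon(f^nx)e^{-\epsilon|n|}$, finite a.e., dominating $K_\epsilon$, and satisfying $\tilde K_\epsilon(f^{\pm 1}x)\le e^{\epsilon}\tilde K_\epsilon(x)$ (see \cite{BP}), after which your two-sided bound $K_\epsilon(f^ix)\le l\,e^{\epsilon\min\{i,n-i\}}$ and the remainder of the argument go through verbatim.
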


Another lemma is to estimate the growth of vectors in a certain cone $K\subseteq \mathbb{R}^m$ invariant under $A (x,n)$   \cite{Kal}. Let $\chi_1<\chi_2<\cdots<\chi_r$  be the Lyapunov exponents of $\mu.$ Let $x$ be a point in $\mathcal{R}^\mu_{\epsilon,l} $ and $y\in X$ be a point such that the orbit segments $x,fx,\cdots,f^nx$ and $y,fy,\cdots,f^ny$ are exponentially $\delta$ close with exponent $\lambda.$ We denote
$x_i=f^ix$ and $y_i=f^iy,\,i=0,1,\cdots,n.$ For each $i$ we have orthogonal splitting $\mathbb{R}^m=E_i\oplus F_i$ with respect to the Lyapunov norm,  where $E_i$ is the Lyapunov space at $x_i$ corresponding to the maximal  Lyapunov exponent $\chi=\chi_r$ and $F_i$ is the direct sum of all other Lyapunov spaces at $x_i$ corresponding to the Lyapunov exponents less than $\chi.$ For any vector $u\in \mathbb{R}^m$ we denote by $u=u'+u^\perp$ the corresponding splitting with $u'\in E_i$ and $u^\perp\in F_i;$ the choice of $i$ will be clear from the context. To simplify notation, we write $\|\cdot\|_i$ for the Lyapunov norm at $x_i$. For each $i=0,1,\cdots,n$ we consider cones $$K_i=\{u\in \mathbb{R}^m:\,\|u^\bot\|_i\leq \|u'\|_i\}.$$
Note that  for $u\in K_i,$ \begin{eqnarray}\label{eq-u-u'}
\|u\|_i\geq \|u'\|_i\geq\frac1{\sqrt2}\|u\|_i.
\end{eqnarray}
 If all Lyapunov exponent
of $A$ with respect to $\mu$ are equal to $\chi$(that is, $r=1$), one has $F_i=\{0\}, K_i=\mathbb{R}^m$, in this case let
\begin{eqnarray}\label{eq-epsilon00000}
 \epsilon_0(\mu)= \lambda\alpha.
 \end{eqnarray}
If not all Lyapunov exponent of $A$ with respect to $\mu$ are equal to $\chi$ (that is, $r>1$), let $\sigma<\chi$ be the second largest Lyapunov exponent of $A$ with respect to $\mu$, that is, $\sigma=\chi_{r-1}$.
 In this case   set  \begin{eqnarray}\label{eq-epsilon0}
  \epsilon_0(\mu)=\min\{\lambda\alpha, (\chi-\sigma)/2\}.
    \end{eqnarray}

\begin{Lem}\label{Lem-simple-estimate-Lyapunov-2}
In the notation above, for any $\epsilon\in(0,\epsilon_0(\mu))$ and any set $\mathcal{R}^\mu_{\epsilon,l}$, there exist $\delta>0$ such that if  $x,f^nx\in\mathcal{R}^\mu_{\epsilon,l}$  and   the orbit segments $x,fx,\cdots,f^nx$ and $y,fy,\cdots,f^n (y)$ are exponentially $\delta$ close with exponent $\lambda$, then for every $i=0,1,\cdots,n-1$ we have $A (y_i) (K_i)\subseteq K_{i+1}$ and  $\| (A (y_i)u)'\|_{i+1}\geq e^{\chi-2\epsilon}\|u'\|_i$ for any $u\in K_i.$

\end{Lem}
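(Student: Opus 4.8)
The plan is to establish both claims—cone invariance and the growth estimate—by carefully tracking how the $\epsilon$-Lyapunov norm transforms under one step of the cocycle $A(y_i)$, using the closeness of the $y$-orbit to the $x$-orbit to control the error between acting with $A(y_i)$ and the model linear map $A(x_i)$. The key point is that at the base orbit the maximal Lyapunov space $E_i$ and the complementary space $F_i$ behave by construction like the standard Pesin blocks: in the Lyapunov norm, $A(x_i)$ expands vectors in $E_i$ by roughly $e^{\chi}$ (up to factor $e^{\epsilon}$) and expands vectors in $F_i$ by at most $e^{\sigma+\epsilon}\le e^{\chi-(\chi-\sigma)}$. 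I would first record these one-step estimates along the base orbit, which are immediate consequences of the definition of the Lyapunov metric (Definition \ref{Def-scalarproduct}) and of the Oseledec splitting: for $u'\in E_i$, $\|A(x_i)u'\|_{i+1}\ge e^{\chi-\epsilon}\|u'\|_i$, and for $u^\perp\in F_i$, $\|A(x_i)u^\perp\|_{i+1}\le e^{\sigma+\epsilon}\|u^\perp\|_i$. Here the choice $\epsilon<\epsilon_0(\mu)\le(\chi-\sigma)/2$ guarantees a genuine gap, so $e^{\sigma+\epsilon}<e^{\chi-\epsilon}$.

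Next I would transfer these from $A(x_i)$ to $A(y_i)$. Since $A$ is $\alpha$-Hölder and $d(x_i,y_i)<\delta e^{-\lambda\min\{i,n-i\}}\le\delta$, the operator norm difference $\|A(y_i)-A(x_i)\|$ is bounded by $\const\cdot\delta^\alpha$. Passing through the Lyapunov norm costs the factors in \eqref{eq-different-norm-estimate}, which are controlled by $l$ on the Pesin block $\mathcal{R}^\mu_{\epsilon,l}$; since $x,f^nx\in\mathcal{R}^\mu_{\epsilon,l}$ and $K_\epsilon$ varies slowly along orbits (Lemma \ref{Lem-simple-estimate-Lyapunov} encapsulates exactly this kind of control), the perturbation $A(y_i)-A(x_i)$ acts on the Lyapunov norm with an error of size at most $\const\cdot l\cdot\delta^\alpha$. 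Choosing $\delta$ small—depending on $\epsilon$, $l$, and the spectral gap—makes this error strictly smaller than the gap $e^{\chi-\epsilon}-e^{\sigma+\epsilon}$ secured in the previous step.

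With the one-step estimates for $A(y_i)$ in hand, both conclusions follow. For cone invariance, take $u\in K_i$, so $\|u^\perp\|_i\le\|u'\|_i$; the good component $(A(y_i)u)'$ inherits a lower bound $\gtrsim e^{\chi-\epsilon}\|u'\|_i$ (after subtracting the $\delta^\alpha$-error and the leakage from $F_i$ into $E_{i+1}$), while the bad component $(A(y_i)u)^\perp$ is bounded above by roughly $e^{\sigma+\epsilon}\|u^\perp\|_i$ plus the same error, which is dominated by the good component precisely because the gap beats the error—hence $\|(A(y_i)u)^\perp\|_{i+1}\le\|(A(y_i)u)'\|_{i+1}$, i.e. $A(y_i)u\in K_{i+1}$. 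The growth estimate $\|(A(y_i)u)'\|_{i+1}\ge e^{\chi-2\epsilon}\|u'\|_i$ comes from the same lower bound after absorbing the $\delta^\alpha$-error into the extra factor $e^{-\epsilon}$, which is where the second $\epsilon$ in the exponent $\chi-2\epsilon$ is spent.

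The main obstacle, and the place demanding the most care, is the bookkeeping of the Hölder perturbation in the Lyapunov norm rather than the Euclidean norm: the Lyapunov norm depends only measurably on the point, the comparison factor $K_\epsilon(x)$ is nonuniform, and the bad subspace $F_i$ can leak into the good subspace $E_{i+1}$ under $A(y_i)$. The entire argument hinges on showing that for $\delta$ small enough this leakage plus the Hölder error stays strictly below the spectral gap on the block $\mathcal{R}^\mu_{\epsilon,l}$; the restriction $\epsilon<\epsilon_0(\mu)=\min\{\lambda\alpha,(\chi-\sigma)/2\}$ and the requirement $\lambda>\epsilon/\alpha$ from Lemma \ref{Lem-simple-estimate-Lyapunov} are exactly what make this possible, and one must verify that the dependence of $\delta$ on $l$ and $\epsilon$ is legitimate before quantifiers are fixed.
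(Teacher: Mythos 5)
The paper offers no proof of this lemma beyond the one-line citation ``direct corollary of [Kal, Lemma 3.3],'' and your sketch is a faithful reconstruction of exactly that cited argument: one-step expansion/contraction estimates in the Lyapunov norm from Definition \ref{Def-scalarproduct}, a H\"older perturbation of size $O(\delta^\alpha)$ transferred from $A(x_i)$ to $A(y_i)$, and the crucial cancellation between the slow growth $e^{\epsilon\min\{i,n-i\}}$ of $K_\epsilon$ along the orbit (only the endpoints lie in $\mathcal{R}^\mu_{\epsilon,l}$) and the exponential closeness $e^{-\lambda\alpha\min\{i,n-i\}}$, which is where $\epsilon<\epsilon_0(\mu)\leq\lambda\alpha$ and the spectral gap $\epsilon<(\chi-\sigma)/2$ enter. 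Your outline identifies all the points where the argument could fail and resolves them correctly, so it matches the intended proof in substance.
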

Lemma \ref{Lem-simple-estimate-Lyapunov-2} is a direct corollary of \cite [Lemma 3.3]{Kal}.

\section{Proof of Theorem \ref{LIDC-1}}

\subsection{DC1 of Maximal-Lyapunov-irregularity}

\begin{Thm}\label{Thm-Maximal-DC1-LyapunovIrregular-Cocycle}

Let $f:X\rightarrow X$ be a  continuous map of a compact metric space $X$ with the exponential  specification.     Let $A:X\rightarrow GL (m,\mathbb{R})$  be a $\alpha-$H$\ddot{o}$lder continuous function for some $\alpha>0$. Suppose that
\begin{eqnarray}\label{different-L-spectrum}
\inf_{\mu\in \mathcal{M}^e_{f}(X)}\chi_{max} (A,\mu)<\sup_{\mu\in \mathcal{M}^e_{f}(X)}\chi_{max} (A,\mu)
\end{eqnarray}
Then the Max-Lyapunov-irregular set $MLI(A,f)$ contains an uncountable DC1-scrambled set.

\end{Thm}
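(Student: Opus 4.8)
The plan is to construct an uncountable DC1-scrambled set directly, by gluing together longer and longer orbit segments that alternate between mimicking a measure of small maximal Lyapunov exponent and one of large maximal Lyapunov exponent, using the exponential specification property to realize the concatenation by genuine orbits. The hypothesis \eqref{different-L-spectrum} gives two ergodic measures $\mu,\nu$ with $\chi_{max}(A,\mu)=a<b=\chi_{max}(A,\nu)$. The key point is that the \emph{exponential} closeness guaranteed by exponential specification, combined with Lemma \ref{Lem-simple-estimate-Lyapunov} and Lemma \ref{Lem-simple-estimate-Lyapunov-2}, lets us control $\frac1n\log\|A(y,n)\|$ along a shadowing orbit $y$ in terms of the values along the traced Pesin-block segments: the error terms $e^{cl\delta^\alpha}$ and $e^{2n\epsilon}$ are subexponential or can be made small, so the maximal Lyapunov exponent of the glued orbit oscillates between values near $a$ and near $b$, forcing Max-Lyapunov-irregularity.

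\textbf{First} I would fix $\epsilon>0$ small and choose, for each of $\mu$ and $\nu$, a compact Pesin block $\mathcal{R}^\mu_{\epsilon,l}$, $\mathcal{R}^\nu_{\epsilon,l}$ of large measure on which the Lyapunov norm is controlled by $l$ and the splitting is continuous; by the Birkhoff/Oseledec ergodic theorem I can find reference points whose orbits spend a definite proportion of time in these blocks with Birkhoff averages of $\log\|A(\cdot)\|$ (and the cone estimate of Lemma \ref{Lem-simple-estimate-Lyapunov-2} for the lower bound near $b$) close to the respective exponents. \textbf{Next} I would design a rapidly growing sequence of block lengths $n_1\ll n_2\ll\cdots$ and, using exponential specification with the gap constant $N=N(\delta)$, concatenate: long runs imitating the low-exponent point followed by long runs imitating the high-exponent point, with the lengths chosen so that at the end of each ``low'' stage the average $\frac1n\log\|A(y,n)\|$ is pulled near $a$ and at the end of each ``high'' stage it is pulled near $b$. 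This produces a single point whose MLE does not converge, hence lies in $MLI(A,f)$.

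\textbf{To get uncountably many scrambled pairs}, I would code each point of $\{0,1\}^{\mathbb{N}}$ (or of a Cantor-like parameter set) into the construction, letting the symbol read at stage $k$ decide a fine feature of the $k$-th block (for instance whether to insert an extra synchronizing sub-block tracing a common reference point), so that distinct parameters give orbits that synchronize on long stretches yet separate on others. The DC1 conditions then follow from the geometry of the shadowing: two orbits coming from the same parameter prefix stay exponentially $\delta$-close on the corresponding long block, which by the exponential decay $d(f^ix,f^iy)<\delta e^{-\lambda\min\{i,n-i\}}$ makes $\frac1n|\{i:d(f^ix,f^iy)<t\}|$ tend to $1$ along a subsequence (upper density one at every scale $t>0$), while at the stages where the parameters first differ I arrange the two orbits to trace segments that are a definite distance apart for a proportion of time tending to a positive value, giving a distance $t_0$ with lower density of closeness zero.

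\textbf{The main obstacle} I expect is the simultaneous control of the two density conditions together with the divergence of the MLE: the block lengths must grow fast enough that each new stage overwhelms the accumulated average (so the limsup and liminf of $\frac1n\log\|A(y,n)\|$ straddle a gap, and so the closeness-density reaches $1$ and $0$ along the intended subsequences), yet the insertion of the distinguishing sub-blocks for the uncountable family must not destroy the MLE oscillation. Balancing these competing requirements is delicate, and the lower bound near $b$ in particular must be extracted from Lemma \ref{Lem-simple-estimate-Lyapunov-2} by tracking a vector kept inside the invariant cone $K_i$, so that $\|A(y,n)\|$ is bounded below by the cone-expansion estimate $e^{(\chi-2\epsilon)n}$ and not merely above; reconciling the Hölder error term $e^{cl\delta^\alpha}$ uniformly across all stages (by shrinking $\delta$ as the blocks lengthen while respecting the fixed $N(\delta)$) is the technical heart of the argument.
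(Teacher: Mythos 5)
Your overall strategy is the same as the paper's: alternate long blocks shadowing a low-exponent reference orbit and a high-exponent one, glued by exponential specification, with Lemma \ref{Lem-simple-estimate-Lyapunov} giving the upper bound $b+\epsilon$ on the low blocks and the cone estimate of Lemma \ref{Lem-simple-estimate-Lyapunov-2} giving the lower bound $a-2\epsilon$ on the high blocks, and with block lengths growing so fast that each new block dominates the accumulated average. That part of your plan is sound and matches the paper.

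The gap is in the DC1 part, in two places. First, for the lower-density-zero condition you say you will ``arrange the two orbits to trace segments that are a definite distance apart for a proportion of time tending to a positive value.'' A positive proportion is not enough: $\liminf_n \frac1n|\{i\le n-1: d(f^ig(p),f^ig(q))<t_0\}|=0$ requires the separated times to fill \emph{asymptotically all} of $[0,n_k]$ along a subsequence. More seriously, you give no mechanism producing two orbit segments of arbitrary length that stay uniformly separated: in a general system two distinct reference points' orbits may come arbitrarily close at many times, and shadowing only preserves separation up to $4\delta$. The paper's device is to choose the high-exponent measure $\nu$ with minimal nondegenerate support (via \cite[Proposition 4.2]{CT}), pick $x\in S_\nu\cap\mathcal{R}^\nu_{\epsilon,l}$, and use that the pair $(x,fx)$ is \emph{distal} ($\zeta=\inf_i d(f^ix,f^{i+1}x)>0$); the symbol $p_s\in\{0,1\}$ then decides whether the $s$-th sub-block traces $x$ or $fx$, so two codes differing at $s$ stay $\zeta-4\delta_{k+1}$ apart throughout a block whose length dominates everything before it. Some such ingredient is indispensable and is the one genuinely non-obvious step you have not supplied. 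Second, your suggestion to let the symbol decide ``whether to insert an extra synchronizing sub-block'' cannot give the upper-density-one condition for \emph{every} pair of distinct codes: two codes in $\{0,1\}^{\mathbb{N}}$ may disagree at all but finitely many stages, and then they would share no late synchronizing blocks. The synchronizing block must be present unconditionally for all codes (in the paper, every stage begins with a sub-block tracing the same point $f^{p_1}x$ with $p_1=0$ fixed for all codes, of length dominating the entire past), while the symbols only control the later sub-blocks of each stage.
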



\begin{proof}

By \cite[Proposition 4.2]{CT}, we can take a $\nu\in \mathcal M_f^e(X)$ such that $S_\nu$ is minimal and nondegenerate. By (\ref{different-L-spectrum}), there is a $\omega\in \mathcal M_f^e(X)$ such that $$\chi_{max}(A,\nu)\neq \chi_{max}(A,\omega).$$
Without loss of generality, we can assume that $$a=\chi_{max}(A,\nu)> \chi_{max}(A,\omega)=b.$$
Fix a $\tau>0$ such that $a-2\tau>b+\tau.$ Let $\lambda$ be the positive number in the definition of exponential specification. Let $\epsilon\in (0,\mathrm{min}\{\tau,\epsilon_0(\nu)\})$, where $\epsilon_0(\nu)$ is the number w.r.t. measure $\nu$ defined in (\ref{eq-epsilon00000}) or (\ref{eq-epsilon0}). So $\lambda>\epsilon /\alpha$ holds. By Lemma \ref{Lem-simple-estimate-Lyapunov}, there is a constant $c$ such that the consequence of Lemma \ref{Lem-simple-estimate-Lyapunov} for $\omega$ holds. Let $\delta>0$ be small enough such that Lemma \ref{Lem-simple-estimate-Lyapunov-2} applies to $\nu$ and
\begin{equation}\label{c-delta}
c\delta^\alpha<1.
\end{equation}
Let $\delta_1=\delta/2$ and $\delta_{i+1}=\delta_{i}/2, i\in\mathbb{N}^+$. Let $N_i=N(\delta_i)$ be the constant in Definition \ref{definition-of-exp-spe}. For the measures $\nu$ and $\omega$, take $l$ large enough such that
$$\nu(\mathcal R_{\epsilon,l}^\nu)>0,\ \ \omega(\mathcal R_{\epsilon,l}^\omega)>0.$$
Note that $\nu(S_\nu)=1$. Then, by Poincar\'{e} Recurrence theorem, there exist two points $x\in \mathcal R_{\epsilon,l}^\nu\cap S_{\nu}, z \in \mathcal R_{\epsilon,l}^\omega$ and
two increasing integer sequences $\{H'_i\}, \{L'_i\}\nearrow \infty$ such that $f^{H'_i}x\in \mathcal R_{\epsilon,l}^\nu$ and $f^{L'_i}x\in \mathcal R_{\epsilon,l}^\omega$. For any number sequence $\{S_i\}$, we set $\sum_{i=1}^{0}S_i=0$. Let $\{\xi_i\}$ be a strictly decreasing sequence with $\lim_{i\to\infty}\xi_i =0$. Then we can choose two subsequences $\{H_i\}\subseteq\{H'_i\}, \{L_i\}\subseteq\{L'_i\}$ such that
\begin{equation}\label{L-control-1}
\frac{\sum(k)+N_{k+1}}{\sum(k)+N_{k+1}+L_{k+1}}<\xi_{k+1},\ k\geq 1
\end{equation}
and
\begin{equation}\label{H-control-1}
\frac{\sum(k)+N_{k+1}+\sum(k,i)}{\sum(k)+N_{k+1}+\sum(k,i)+H_{k(k+1)/2+i}}<\xi_{k+1},\ k\geq 1,\ 1\leq i\leq k+1,
\end{equation}
where
$$\sum(k):=\sum_{i=1}^{k}L_i+\sum_{i=1}^{k(k+1)/2}H_i+\sum_{i=1}^{k}(i+1)N_i,$$
$$\sum(k,i):=L_{k+1}+iN_i+\sum_{j=1}^{i-1}H_{k(k+1)/2+j}.$$
Denote 
$$\prod(k):=\sum(k)+N_{k+1};$$ 
$$\prod(k,i):=\sum(k)+N_{k+1}+\sum(k,i).$$ 
Then (\ref{L-control-1}) and (\ref{H-control-1}) can be written as
\begin{equation}\label{L-control}
\frac{\prod(k)}{\prod(k)+L_{k+1}}<\xi_{k+1},\ k\geq 1
\end{equation}
and
\begin{equation}\label{H-control}
\frac{\prod(k,i)}{\prod(k,i)+H_{k(k+1)/2+i}}<\xi_{k+1},\ k\geq 1,\ 1\leq i\leq k+1
\end{equation}

Fix $z_0\in X$ and $p\in\{0,1\}^{\infty}$ with $p_1=0$. Let
$$G_1=B_0(z_0,\delta_1,\lambda)\cap f^{-N_1}B_{L_1}(z,\delta_1,\lambda)\cap f^{-(2N_1+L_1)}B_{L_1}(f^{p_1}x,\delta_1,\lambda).$$
By exponential specification, $G_1\neq\emptyset$. Fix a $g_1\in G_1$.
Suppose $g_k$ has been fixed. Let
$$G_{k+1}:=B_{\sum(k)}(g_k,\delta_{k+1},\lambda)\cap f^{-\prod(k)}B_{L_{k+1}}(z,\delta_{k+1},\lambda)\cap(\bigcap_{i=1}^{k+1}f^{-\prod(k,i)}B_{H_{k(k+1)/2+i}}(f^{p_i}x,\delta_{k+1},\lambda)).$$
By exponential specification, $G_{k+1}\neq\emptyset$. Fix a $g_{k+1}\in G_{k+1}$. Hence we construct $\{g_k\}_{k=1}^{\infty}$ by induction. Obviously, $\{g_k\}_{k=1}^{\infty}$ is a Cauchy sequence since $g_{k+1}\in B_{\sum(k)}(g_k,\delta_{k+1},\lambda)$. Without loss of generality, we denote $g(p):=\lim_{k\to\infty}g_k$. By the continuity of $f$, it is easy to check that for any $k\in\mathbb{N^+}$,
\begin{equation}\label{general-LR}
f^{\prod(k)}g(p)\in B_{L_{k+1}}(z,2\delta_{k+1},\lambda)
\end{equation}
and
\begin{equation}\label{minimal-LR}
f^{\prod(k,i)}g(p)\in B_{H_{k(k+1)/2+i}}(f^{p_i}x,2\delta_{k+1},\lambda),\ 1\leq i \leq k+1
\end{equation}
\noindent since $\sum_{i=k}^{\infty}\delta_i\leq 2\delta_k$.
We complete the proof by proving the following three facts:
\begin{description}
\item[(1)] $g(q)\neq g(p)$ for any $q\in\{0,1\}^{\infty}$ with $q_1=0, q\neq p$. Furthermore, $g(p),g(q)$ is a DC1-scrambled pair;
\item[(2)] $\{g(p)\}_{p\in\{0,1\}^{\infty},p_1=0}$ is an uncountable set;
\item[(3)] $g(p)\in MLI(A,f)$.
\end{description}
{\bf(1)}: By $q\neq p$, there is a integer $s\geq 2$ such that $p_s\neq q_s$. Note that $x\in S_\nu$. Then by \cite[Lemma 4.1]{CT}, the pair $x,fx$ is distal. Denote $$\zeta=\inf\{d(f^ix,f^{i+1}x)|\ i\in\mathbb{N}\}.$$
Fixed a $\kappa<\zeta$, we can get an $I_\kappa>s$ such that for any $k\geq I_\kappa$, $4\delta_{k+1}<\zeta-\kappa$. By (\ref{minimal-LR}), for any $k\geq I_\kappa$
$$f^{\prod(k,s)}g(p)\in B_{H_{k(k+1)/2+s}}(f^{p_s}x,2\delta_{k+1},\lambda)$$
and
$$f^{\prod(k,s)}g(q)\in B_{H_{k(k+1)/2+s}}(f^{q_s}x,2\delta_{k+1},\lambda).$$
Then, for any $i\in [0,H_{k(k+1)/2+s}]$
\begin{align*}
&\ \ d(f^{\prod(k,s)+i}g(p),f^{\prod(k,s)+i}g(q))\\
&>d(f^{p_s+i}x,f^{q_s+i}x)-4\delta_{k+1}\\
&>\zeta-4\delta_{k+1}\\
&>\kappa.
\end{align*}
Then
\begin{align*}
& \liminf_{n\to \infty}\frac{1}{n}|\{j\in [0,n-1]:\ d(f^jg(p),f^jg(q))<\kappa\}|\\
\le & \liminf_{k\geq I_\kappa,\ k\to \infty}\frac{|\{j\in [0,\prod(k,s)+H_{k(k+1)/2+s}-1]:\ d(f^jg(p),f^jg(q))<\kappa\}|}{\prod(k,s)+H_{k(k+1)/2+s}}\\
\le & \liminf_{k\geq I_\kappa,\ k\to \infty}\frac{\prod(k,s)}{\prod(k,s)+H_{k(k+1)/2+s}}\\
\overset{(\ref{H-control})}{\le} & \liminf_{k\geq I_\kappa,\ k\to \infty}\xi_{k+1}=0,
\end{align*}
which implies $g(q)\neq g(p)$. On the other hand, For any $t>0$, we can choose $I_t\in\mathbb{N}$ large enough such that $4\delta_{k+1}<t$ holds for any $k\geq I_t$. Note that
$$f^{\prod(k,1)}g(p)\in B_{H_{k(k+1)/2+1}}(f^{p_1}x,2\delta_{k+1},\lambda)$$
and
$$f^{\prod(k,1)}g(q)\in B_{H_{k(k+1)/2+1}}(f^{q_1}x,2\delta_{k+1},\lambda).$$
Note that $p_1=q_1=0$. Then, for any $k\geq I_\kappa$ and any $i\in [0,H_{k(k+1)/2+1}]$,
\begin{align*}
&\ \ d(f^{\prod(k,1)+i}g(p),f^{\prod(k,1)+i}g(q))\\
&<4\delta_{k+1}\\
&<t.
\end{align*}
Then,
\begin{align*}
&\limsup_{n\to \infty}\frac{1}{n}|\{j\in [0,n-1]:\ d(f^ig(p),f^ig(q))<t)\}|\\
\ge & \limsup_{n\to \infty}\frac{1}{n}|\{j\in [0,n-1]:\ d(f^jg(p),f^jg(q))<4\delta_{I_t+1})\}|\\
\ge & \limsup_{k\geq I_t,\ k\to \infty}\frac{|\{j\in [0,\prod(k,1)+H_{k(k+1)/2+1}-1]:\ d(f^jg(p),f^jg(q))<4\delta_{k+1}\}|}{\prod(k,1)+H_{k(k+1)/2+1}}\\
\ge & \limsup_{k\geq I_t,\ k\to \infty}(1-\frac{\prod(k,1)}{\prod(k,1)+H_{k(k+1)/2+1}})\\
\overset{(\ref{H-control})}{\ge} & \limsup_{k\geq I_t,\ k\to \infty}(1-\xi_{k+1})\\
= & 1.
\end{align*}
Thus item {\bf(1)} holds.

{\bf(2)}: Implied by item {\bf(1)} and the fact that $\{0,1\}^{\infty}$ is uncountable.

{\bf(3)}: Let $C=\mathrm{max}_{x\in X}\{||A(x)||,||A^{-1}(x)||\}$. By (\ref{general-LR}), $f^{\prod(k)}g(p)\in B_{L_{k+1}}(z,2\delta_{k+1},\lambda)$. By Lemma \ref{Lem-simple-estimate-Lyapunov} and (\ref{c-delta}),

\begin{align*}
& \frac{1}{\prod(k)+L_{k+1}}\log\|A(g(p),\prod(k)+L_{k+1})\|\\
\leq & \frac{1}{\prod(k)+L_{k+1}}\log C^{\prod(k)}\|A(f^{\prod(k)}g(p),L_{k+1})\|\\
\leq & \frac{1}{\prod(k)+L_{k+1}}\log C^{\prod(k)} le^le^{L_{k+1}(b+\epsilon)}\\
= & \frac{\prod(k)}{\prod(k)+L_{k+1}}\log C+ \frac{L_{k+1}}{\prod(k)+L_{k+1}}(b+\epsilon)+\frac{l+\log l}{\prod(k)+L_{k+1}}\\
\overset{(\ref{L-control})}{\leq} & \xi_{k+1}\log C + (b+\epsilon)+ \frac{l+\log l}{\prod(k)+L_{k+1}}.
\end{align*}

Thus,
\begin{equation}\label{lower-boundary}
\limsup_{k\to\infty}\frac{1}{\prod(k)+L_{k+1}}\log\|A(g(p),\prod(k)+L_{k+1})\|\leq b+\epsilon\leq b+\tau.
\end{equation}
On the other hand, by (\ref{minimal-LR}), $f^{\prod(k,1)}g(p)\in B_{H_{k(k+1)/2+1}}(f^{p_1}x,2\delta_{k+1},\lambda)$. By Lemma \ref{Lem-simple-estimate-Lyapunov-2}, for any $u\in K_0$ with $\|u\|=1$,

\begin{equation}\label{upper}
\|(A(f^{\prod(k,1)}g(p),H_{k(k+1)/2+1})u)'\|_{H_{k(k+1)/2+1}}\geq e^{H_{k(k+1)/2+1}(a-2\epsilon)}\|u'\|_0.
\end{equation}

Together with (\ref{eq-different-norm-estimate}), (\ref{eq-estimate-measure-Pesinblock}), (\ref{eq-u-u'}) and (\ref{upper}), we have

\begin{align*}
& \|A(f^{\prod(k,1)}g(p),H_{k(k+1)/2+1})\|\\
\geq & \|A(f^{\prod(k,1)}g(p),H_{k(k+1)/2+1})u\|\\
\geq & \frac{1}{l}\|A(f^{\prod(k,1)}g(p),H_{k(k+1)/2+1})u\|_{H_{k(k+1)/2+1}}\\
\geq & \frac{1}{l}\|(A(f^{\prod(k,1)}g(p),H_{k(k+1)/2+1})u)'\|_{H_{k(k+1)/2+1}}\\
\geq & \frac{1}{l}e^{H_{k(k+1)/2+1}(a-2\epsilon)}\|u'\|_0\\
\geq & \frac{1}{\sqrt{2}l}e^{H_{k(k+1)/2+1}(a-2\epsilon)}\|u\|_0\\
\geq & \frac{1}{\sqrt{2}l}e^{H_{k(k+1)/2+1}(a-2\epsilon)}\|u\|\\
= & \frac{1}{\sqrt{2}l}e^{H_{k(k+1)/2+1}(a-2\epsilon)}.
\end{align*}

Then,
\begin{align*}
& \frac{\log\|A(g(p),\prod(k,1)+H_{k(k+1)/2+1})\|}{\prod(k,1)+H_{k(k+1)/2+1}}\\
&\\
\geq & \frac{\log C^{-\prod(k,1)}\|A(f^{\prod(k,1)}g(p),H_{k(k+1)/2+1})\|}{\prod(k,1)+H_{k(k+1)/2+1}}\\
&\\
\geq & \frac{-\prod(k,1)\log C+\log\frac{1}{\sqrt{2}l}+H_{k(k+1)/2+1}(a-2\epsilon)}{\prod(k,1)+H_{k(k+1)/2+1}}\\
&\\
\overset{(\ref{H-control})}{\geq} & -\xi_{k+1}\log C+ (1-\xi_{k+1})(a-2\epsilon)+ \frac{\log\frac{1}{\sqrt{2}l}}{\prod(k,1)+H_{k(k+1)/2+1}}.
\end{align*}

Thus,
\begin{equation}\label{upper-boundary}
\liminf_{k\to\infty}\frac{\log\|A(g(p),\prod(k,1)+H_{k(k+1)/2+1})\|}{\prod(k,1)+H_{k(k+1)/2+1}}\geq a-2\epsilon\geq a-2\tau.
\end{equation}

(\ref{lower-boundary}) and (\ref{upper-boundary}) imply that $\frac{1}{n}\log\|A(g(p),n)\|$ diverges as $n\to\infty$, which means $g(p)\in MLI(A,f)$.

\end{proof}

For a cocycle  $A$ and an ergodic measure $\mu$, let $\lambda_1 \geq \lambda_2 \geq\cdots \geq \lambda_m $
 (counted with their multiplicities) denote the Lyapunov exponents of $\mu$ for $A$.
Let $$\Lambda^A_i(\mu)=\sum_{j=1}^i\lambda_j.$$ Then it is easy to see that: for any two ergodic measures $\mu,\nu\in \mathcal{M}^e_{f}(X)$,
\begin{eqnarray}\label{Equivalent-Lyapunov-exponents}
Sp(\mu,A)=Sp(\nu,A)\Leftrightarrow \Lambda^A_i(\mu)=\Lambda^A_i(\nu),\,\forall \,i.
\end{eqnarray}
Let us consider cocycle $\wedge^i A(x,n)$ induced by cocycle $A(x,n)$ on the $i$-fold exterior powers $\wedge^i \mathbb{R}^m$.
 For an  ergodic measure $\mu$, it is standard to see that
 for any $1\leq i\leq m$
\begin{eqnarray}\label{Equivalent-Lyapunov-exponents-2015}\chi_{max}(\wedge^i A,\mu)
=\sum_{j=1}^i\lambda_j=\Lambda^A_i(\mu).
\end{eqnarray}
\medskip

$\mathbf{Proof\ of\ Theorem\ \ref{LIDC-1}}$
Assume that there are two ergodic measures with different Lyapunov spectrum. By  (\ref{Equivalent-Lyapunov-exponents}), there is some $1\leq i \leq m$ such that
\begin{eqnarray*}\label{eq-twodifferent-measure}
  \inf_{\mu\in \mathcal{M}^e_{f}(X)} \Lambda^A_i(\mu)
< \sup_{\mu\in \mathcal{M}^e_{f}(X)} \Lambda^A_i(\mu) .
\end{eqnarray*}
By   (\ref{Equivalent-Lyapunov-exponents-2015}),   one has
\begin{eqnarray*}\label{eq-different-measure}
 \inf_{\mu\in \mathcal{M}^e_{f}(X)}\chi_{max}(\wedge^i A,\mu)<\sup_{\mu\in \mathcal{M}^e_{f}(X)} \chi_{max}(\wedge^i A,\mu).
\end{eqnarray*}
 Then we can apply Theorem \ref{Thm-Maximal-DC1-LyapunovIrregular-Cocycle} to the cocycle
$\wedge^i A(x,n)$ and obtain that the Max-Lyapunov-irregular set of $\wedge^i A$,   $ MLI(\wedge^i A,f),$ contains an uncountable DC1-scrambled set. Note that $LI(A,f)\supseteq   MLI(\wedge^i A,f)$, since  a point Lyapunov-regular for $A$ should be also Max-Lypunov-regular for $\wedge^i A$.  So $LI(A,f)$ contains an uncountable DC1-scrambled set. Now we complete the proof.
\qed

{\bf Acknowledgements.  }   Tian is the corresponding author and is supported by National Natural Science Foundation of China (grant no.   11671093).

\end{document}